\DeclareMathOperator{\Sol}{Sol}
\DeclareMathOperator{\AVVI}{AVVI}
\DeclareMathOperator{\AVI}{AVI}
\DeclareMathOperator{\pf}{pf}
\DeclareMathOperator{\Pa}{P}
\DeclareMathOperator{\Q}{Q}
\DeclareMathOperator{\Pone}{P_0}
\DeclareMathOperator{\Ptwo}{P_s}
\DeclareMathOperator{\LFVOP}{LFVOP}
\DeclareMathOperator{\VI}{VI}
\DeclareMathOperator{\ri}{ri}
\DeclareMathOperator{\N}{\mathbb{N}}
\DeclareMathOperator{\R}{\mathbb{R}}
\DeclareMathOperator{\F}{\mathcal{F}}
\DeclareMathOperator{\VVI}{VVI}
\DeclareMathOperator{\inte}{int}
\begin{document}

\title{Numbers of the connected components of the solution sets of monotone affine vector variational inequalities}

\author{Vu Trung Hieu}

\institute{Vu Trung Hieu \at Division of Mathematics, Phuong Dong University,\\ 171 Trung Kinh,  Cau Giay, Hanoi, Vietnam\\
E-mail: hieuvut@gmail.com}

\date{Received: date / Accepted: date}

\maketitle

\begin{abstract} This paper establishes several upper and lower estimates for the maximal number of the connected components of the solution sets of monotone affine vector variational inequalities. Our results give a partial solution to Question~2 in [N.D. Yen and J.-C. Yao, \textit{Monotone affine vector variational inequalities}, Optimization 60 (2011), pp. 53--68] and point out that the number depends not only on the number of the criteria but also on the number of variables of the vector variational inequality under investigation.
\end{abstract}
\keywords{Monotone affine vector variational inequality \and Solution set \and Number of connected components \and Scalarization formula \and Skew-symmetric matrix}
\subclass{49J40 \and 47H05 \and 90C29 \and 90C33 }

\section{Introduction}
The concept of \textit{vector variational inequality}, which was introduced by Giannessi \cite{G80} in 1980, has received a lot of attention from researchers. In particular, it has been shown that results on \textit{monotone affine vector variational inequalities} (monotone AVVIs) are very useful for studying linear fractional vector optimization problems and convex quadratic vector optimization problems (see, e.g., \cite{YY2011,Yen2012} and the references therein). Some open questions concerning the connectedness structure of the solution sets of monotone affine vector variational inequalities were raised in \cite{YY2011} and \cite{Yen2016}. 

Recently, thanks to several results from Real Algebraic Geometry, the investigation of the connectedness structure of the solution sets of polynomial vector variational inequalities has got a significant progresses \cite{HYY2015a,HYY2015b} (see also \cite{Hieu16}). The most important results assert that, under a mild constraint qualification, the proper Pareto solution sets, the Pareto solution sets, and the weak Pareto solution sets of these problems have finitely many connected components, which are path connected. Among other related results, we would like to mention the explicit upper bounds for the number of connected components in these solution sets that were given in \cite{Hieu16}. 

In this paper, we restrict our attention to monotone AVVIs. Several upper bounds for the number of connected components of the solution sets will be obtained. These numbers are strictly smaller than the upper bounds previously shown in \cite[Theorem 3.3]{Hieu16}.

About the maximal number of connected components of the solution sets of monotone AVVIs, there is a conjecture that this number equals the number of criteria in the vector variational inequalities (see \cite[Question 2]{YY2011}). However, by considering a skew-symmetric affine vector variational inequality, we will see that the conjecture  is wrong for bicriteria problems. Moreover, it will be proved that the number of the connected components depends on not only the number of criteria but also on the number of variables of the vector variational inequality. 

Section 2 collects the definitions, notations, and auxiliary results needed for our subsequent discussions. Section 3 studies the connectedness structure of the solution sets of bicriteria skew-symmetric affine vector variational inequalities. Section 4 focuses on the maximal number of connected components of the Pareto solution sets and the weak Pareto solution sets of monotone AVVIs.

\section{Preliminaries}
We now recall the notion of vector variational inequality and several elementary properties of the model.

The scalar product of $x, y$ from $\R^n$ is denoted by $\langle x,y\rangle$. Let $K\subset\R^n$ be a nonempty subset and $F_0: K\to\R^n$ a vector-valued function. The \textit{variational inequality} defined by $F_0$ and $K$ is the problem
$${(\VI)} \qquad {\text{Find}}\ \, x\in K \ \; \text{such that}\ \, \langle F_0(x),y-x\rangle\geq 0,\ \, \forall y\in K.$$
The corresponding solution set is denoted by $\Sol(\VI)$.
 Note that $x$ solves $(\VI)$ if and only if
$F_0(x)\in -N_K(x),$
where $N_K(x)$ is the normal cone  of $K$ at $x\in K$  which is defined by 
$$
N_K(x)=\{x^*\in \R^n:\langle x^*,y-x\rangle\leq 0, \forall y\in K\}.
$$
Clearly, if $x$ belongs to the interior of $K$, then $x\in\Sol(\VI)$ if and only if $F_0(x)=0$. So, when $K=\R^n$, $x$ solves $(\VI)$  if and only if $x$ is a zero point of the function $F_0$.

When $F_0$ is \textit{monotone}, i.e., $\langle  F_0(y)-F_0(x),y-x\rangle \geq 0$ for all $x,y\in K$, $(\VI)$ is called a \textit{monotone variational inequality}. Remind that if $F_0$  is monotone, then $\Sol(\VI)$ is a convex set \cite{FaPa03}.

Suppose that $F_l: K\to\mathbb R^n$ ($l=1,\dots, m$) are given vector-valued functions. We put $F=(F_1,\dots, F_m)$ and 
\begin{center}
	$F(x)(u)=( \langle F_1(x),u\rangle, \cdots, \langle F_m(x),u\rangle), \ \forall x\in K,\ \forall u\in\R^n$.
\end{center}
Let $C=\R^m_+$ and $\Delta_m=\left\{(\xi_1,\dots,\xi_m)\in \R^m_+\,:\,
\displaystyle\sum_{l=1}^m\xi_l=1\right\}$, where $\R^m_+$ is the nonnegative orthant of  $\R^m$. The relative interior of $\Delta_m$ is described by the formula ${\ri}\Delta_m=\{\xi\in\Delta_m\,:\, \xi_l>0,\ l=1,...,m\}.$ 
The problem
$${ (\VVI)} \qquad {\text{Find}}\ \, x\in K \ \; \text{such that}\ \, F(x)(y-x)\nleq_{C\setminus\{0\}}0,\ \, \forall y\in K,$$ 
is said to be the \textit{vector variational inequality} defined by $F$ and $K$. Here the inequality means that
$F(x)(x-y)\notin C\setminus\{0\}$. 
The solution set of $(\VVI)$ is denoted by $\Sol(\VVI)$ and called the \textit{Pareto solution set}. One associates to $(\VVI)$ the problem
$${(\VVI)^w} \qquad {\text{Find}}\ \, x\in K \ \; \text{such that}\ \, F(x)(y-x)\nleq_{\inte C}0, \ \, \forall y\in K,$$
where ${\inte}C$ is the interior of $C$ and the inequality means $F(x)(x-y)\notin {\inte}C$. The solution set of $(\VVI)^w$ is denoted by $\Sol^w(\VVI)$ and called the \textit{weak Pareto solution set}. One says that $(\VVI)$ is a \textit{monotone vector variational inequality} if all the operators $F_l, l = 1, ..., m,$ are monotone.

For each $\xi\in\Delta_m$, consider the variational inequality 
$$(\VI)_{\xi} \quad\  \text{Find }  x\in K \ \text{ such that }  \left\langle  \sum_{i=1}^m\xi_iF_i(x),y-x\right\rangle  \geq 0, \ \, \forall y\in
K.$$
The solution sets of  $(\VVI)$ can be computed or estimated via certain unions of the solution sets of $(\VI)_{\xi}$ with $\xi\in \Delta$. Those well-known scalarization formulas are as follows.

\begin{theorem} {\rm (see \cite{LKLY98}, \cite{LY01})}\label{thm_scalarization} It holds that
	\begin{equation}\label{scalarization}\bigcup_{\xi\in{\ri}\Delta_m}{\Sol(\VI)}_\xi\subset {\Sol(\VVI)}\subset {\Sol^w}{(\VVI)}=\bigcup_{\xi\in\Delta_m}{\Sol(\VI)}_\xi. \end{equation} 
	If $K$ is a polyhedral convex set, i.e.,
	$K$ is the intersection of finitely many closed half-spaces of
	$\mathbb R^n$, then the first inclusion in \eqref{scalarization}
	holds as equality.
\end{theorem}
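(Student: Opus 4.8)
The plan is to establish the two inclusions and the one equality in \eqref{scalarization} separately; the displayed chain then follows at once, and two of the three relations are pure sign arguments. For $\bigcup_{\xi\in\ri\Delta_m}\Sol(\VI)_\xi\subset\Sol(\VVI)$, suppose $x\in\Sol(\VI)_\xi$ with all $\xi_l>0$ but $x\notin\Sol(\VVI)$; pick $y\in K$ with $F(x)(x-y)\in C\setminus\{0\}$, so $\langle F_l(x),x-y\rangle\ge0$ for every $l$ and strictly for at least one index, and multiplying by the positive $\xi_l$ and summing gives $\langle\sum_l\xi_lF_l(x),x-y\rangle>0$, contradicting $\langle\sum_l\xi_lF_l(x),y-x\rangle\ge0$. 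The inclusion $\Sol(\VVI)\subset\Sol^w(\VVI)$ is trivial since $\inte C\subset C\setminus\{0\}$. Finally $\bigcup_{\xi\in\Delta_m}\Sol(\VI)_\xi\subset\Sol^w(\VVI)$ follows by the same contradiction: if $F(x)(x-y)\in\inte C$ then $\langle F_l(x),x-y\rangle>0$ for every $l$, and since $\sum_l\xi_l=1$ some $\xi_{l_0}$ is positive, which again forces $\langle\sum_l\xi_lF_l(x),x-y\rangle>0$.

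The real content is the reverse inclusion $\Sol^w(\VVI)\subset\bigcup_{\xi\in\Delta_m}\Sol(\VI)_\xi$, which I would obtain from a convex separation theorem. Fix $x\in\Sol^w(\VVI)$ and set $A=\{F(x)(y-x):y\in K\}$. Since $K$ is convex and $u\mapsto F(x)(u)$ is linear, $A$ is a convex set containing $0$, and $x\in\Sol^w(\VVI)$ says precisely that $A\cap(-\inte C)=\emptyset$. Separating $A$ from the nonempty open convex cone $-\inte C$ yields a nonzero $\xi\in\R^m$ with $\langle\xi,a\rangle\ge\langle\xi,b\rangle$ for all $a\in A$ and $b\in-\inte C$; since $-\inte C$ is invariant under positive scaling and $0\in A$, this forces $\langle\xi,b\rangle\le0$ on $-\inte C$, hence, by continuity and testing on the standard basis vectors, $\xi\in\R^m_+\setminus\{0\}$. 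Evaluating the inequality at $0\in A$ then gives $\langle\xi,a\rangle\ge0$ for all $a\in A$, i.e. $\langle\sum_l\xi_lF_l(x),y-x\rangle\ge0$ for every $y\in K$; rescaling $\xi$ so that $\sum_l\xi_l=1$ puts it in $\Delta_m$ and yields $x\in\Sol(\VI)_\xi$.

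It remains to upgrade the first inclusion to an equality when $K$ is polyhedral, i.e. to show each $x\in\Sol(\VVI)$ lies in some $\Sol(\VI)_\xi$ with $\xi\in\ri\Delta_m$. Now $A$ is a polyhedral convex set, $x\in\Sol(\VVI)$ means $A\cap\bigl(-(C\setminus\{0\})\bigr)=\emptyset$, and together with $0\in A$ this gives $A\cap(-C)=\{0\}$; one checks that the conical hull of $A$, which is again polyhedral, also meets $-\R^m_+$ only at the origin. The point is that a polyhedral cone $P$ with $P\cap(-\R^m_+)=\{0\}$ carries a strictly positive linear functional that is nonnegative on $P$: otherwise the dual cone $P^*$ would be disjoint from $\inte\R^m_+$, hence separable from $\R^m_+$, producing a nonzero $\eta\in\R^m_+$ with $-\eta\in P^{**}=P$, so $-\eta\in P\cap(-\R^m_+)=\{0\}$, a contradiction. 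Applying this with $P$ the conical hull of $A$ gives $\xi\in\inte\R^m_+$ with $\langle\xi,a\rangle\ge0$ on $A$, so after normalization $\xi\in\ri\Delta_m$ and $x\in\Sol(\VI)_\xi$. The main obstacle is precisely this last step: ordinary separation only places the multiplier in $\R^m_+$, so one gets membership in $\Delta_m$ but not in $\ri\Delta_m$ in general, and forcing $\xi$ into the open orthant genuinely exploits the polyhedrality of $K$; everything else is routine.
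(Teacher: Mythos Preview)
The paper does not supply its own proof of this theorem: it is quoted from \cite{LKLY98} and \cite{LY01} without argument, so there is no in-paper proof to compare against. Your write-up is therefore a self-contained proof of a result the paper merely cites, and it follows the standard route those references use: elementary sign arguments for the easy inclusions, convex separation of $A=\{F(x)(y-x):y\in K\}$ from $-\inte C$ for the equality $\Sol^w(\VVI)=\bigcup_{\xi\in\Delta_m}\Sol(\VI)_\xi$, and a bidual-cone argument exploiting polyhedrality of $K$ to push the multiplier into $\ri\Delta_m$ for the last part.

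Your argument is essentially correct; the only point that deserves one more line of justification is the assertion that the conical hull $P=\operatorname{cone}(A)$ is polyhedral. This is where you implicitly use $P^{**}=P$, which requires $P$ to be closed. It is true here because $0\in A$ and $A$ is polyhedral: writing $A=\{z:Bz\le c\}$ with $c\ge 0$ (possible since $0\in A$), one checks directly that $\operatorname{cone}(A)=\{z:B_iz\le 0\ \text{for all }i\text{ with }c_i=0\}$, a closed polyhedral cone. Without $0\in A$ the conical hull of a polyhedron can fail to be closed, so this small remark is exactly where the hypothesis ``$K$ polyhedral'' together with $0\in A$ is doing its work. Once that is said, your contradiction via $-\eta\in P^{**}=P$ and $\eta\in\R^m_+\setminus\{0\}$ goes through and finishes the proof.
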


The multifunction $\Phi:\Delta_m \rightrightarrows \R^n$, which is said to be the
\textit{basic multifunction} associated to the problem $(\VVI)$, is defined by setting
$\Phi(\xi)=	\Sol(\VI)_{\xi}$ for all $\xi\in\Delta_m$. If $K$ is polyhedral convex, then from Theorem \ref{thm_scalarization} it follows that $\Sol^{w}(\VVI)=\Phi(\Delta_m)$ and $\Sol(\VVI)=\Phi(\ri\Delta_m)$. So, one can use the basic multifunction $\Phi$ to investigate different properties of the solution sets of $(\VVI)$. 

When $K$ is a polyhedral convex set and $F_0(x)=Mx+q$ for all $x\in K$,
where $M\in\mathbb R^{n\times n}$ and
$q\in\mathbb R^n$, then $(\VI)$ is an {\it affine variational inequality}. In that case, the variational inequality problem and its solution set are denoted by
$(\AVI)$ and $\Sol(\AVI)$, respectively.
Note that, if $M\in\mathbb R^{n\times n}$
is a positive semidefinite matrix, i.e., $\langle Mv,v\rangle\geq 0$
for all $v\in\mathbb R^n$, then the affine operator $F_0(x)=Mx+q$
(with $q\in\mathbb R^n$ being fixed) is monotone on $K$. The
converse is true if ${\rm int}K\neq\emptyset$. If $M^T=-M$, where $M^T$ is the transpose of $M$, then one says that $M$ is \textit{skew-symmetric}. Clearly, a skew-symmetric matrix is positive semidefinite.  

Let $K$ be a polyhedral convex set given by $K=\{x\in \R^n:Ax\geq b\}$, where  $A=(a_{ij})\in {\mathbb R}^{p\times n}$ and $b=(b_i)\in {\mathbb R}^p$. By using Lagrange multipliers, one can characterize the solutions of $(\AVI)$ as follows. 

\begin{theorem}{\rm (see, e.g., \cite[Theorem~5.3]{LTY2005})}\label{Lagrange multipliers rule} Vector $x\in \R^n$ is a solution of $(\AVI)$  if and only if there exists $\lambda\in \R^p$ such that
	\begin{equation}\label{system}
\left\lbrace 	
	\begin{array}{l}	Mx-A^T\lambda+q =0, \\ 
	\lambda^T(Ax-b)=0, \ \lambda \geq 0 , \ Ax \geq b. 	\end{array}\right. 
	\end{equation}
\end{theorem}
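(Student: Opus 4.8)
The plan is to reduce the assertion to the standard description of the normal cone of a polyhedral convex set and then invoke Farkas' lemma. Recall from Section~2 that $x$ solves $(\AVI)$ if and only if $x\in K$ and $F_0(x)=Mx+q\in -N_K(x)$, where $K=\{x\in\R^n:Ax\geq b\}$. Hence everything reduces to computing $N_K(x)$ at a feasible point.

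First I would fix $x\in K$, write $a_i\in\R^n$ for the $i$-th row of $A$ (viewed as a column vector), so that $\langle a_i,x\rangle$ is the $i$-th component of $Ax$, and set $I(x)=\{i\in\{1,\dots,p\}:\langle a_i,x\rangle=b_i\}$ for the index set of active constraints. A short argument shows that the tangent cone to $K$ at $x$ is $T_K(x)=\{v\in\R^n:\langle a_i,v\rangle\geq 0,\ i\in I(x)\}$: for such a $v$ one has $x+tv\in K$ for all small $t>0$ (the inactive constraints persist by continuity, the active ones by the sign condition), which gives the inclusion $\supseteq$; the reverse inclusion follows at once from the definition of a tangent direction together with $\langle a_i,y-x\rangle\geq 0$ for $y\in K$, $i\in I(x)$. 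Since $K$ is convex, $N_K(x)$ is the polar cone $T_K(x)^\circ=\{x^*\in\R^n:\langle x^*,v\rangle\leq 0\ \forall v\in T_K(x)\}$.

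The key step is to identify this polar cone:
\[
N_K(x)=\left\{-\sum_{i\in I(x)}\lambda_i a_i\ :\ \lambda_i\geq 0\right\}
      =\left\{-A^T\lambda\ :\ \lambda\in\R^p,\ \lambda\geq 0,\ \lambda^T(Ax-b)=0\right\}.
\]
The inclusion $\supseteq$ is elementary, since $\langle -\sum_{i\in I(x)}\lambda_i a_i,v\rangle=-\sum_{i\in I(x)}\lambda_i\langle a_i,v\rangle\leq 0$ for every $v\in T_K(x)$. For $\subseteq$, suppose $x^*$ does not lie in the cone generated by the vectors $-a_i$, $i\in I(x)$; then by Farkas' lemma (a vector lies outside a finitely generated cone if and only if some hyperplane separates it from the cone) there is $v\in\R^n$ with $\langle a_i,v\rangle\geq 0$ for every $i\in I(x)$ and $\langle x^*,v\rangle>0$, i.e. $v\in T_K(x)$ witnesses $x^*\notin N_K(x)$. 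This proves the displayed equalities; the rewriting in terms of $\lambda\in\R^p$ uses that $\lambda\geq 0$ together with $\lambda^T(Ax-b)=0$ forces $\lambda_i=0$ at every inactive index, so the support of $\lambda$ lies in $I(x)$.

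Finally I would assemble the pieces: $x$ solves $(\AVI)$ $\iff$ $Ax\geq b$ and $Mx+q\in -N_K(x)$ $\iff$ there exists $\lambda\in\R^p$ with $Mx+q=A^T\lambda$, $\lambda\geq 0$, $\lambda^T(Ax-b)=0$ and $Ax\geq b$, which is exactly the system~\eqref{system}. I expect the polarity/Farkas computation of $N_K(x)$ to be the only genuine obstacle; the steps before and after it are routine. One could alternatively just quote the polyhedral normal-cone formula, but carrying out the Farkas argument makes the statement self-contained.
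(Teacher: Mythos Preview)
The paper does not prove this statement at all; it is quoted as a known result from \cite[Theorem~5.3]{LTY2005} and used as a black box. Your argument via the polyhedral normal-cone formula and Farkas' lemma is correct and is the standard route to this KKT-type characterization, so there is nothing to compare and nothing to fix.
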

\begin{remark}\label{remark_intK} If $x$ belongs to the interior of $K$, then $x$ is a solution of $(\AVI)$ if and only if $x$ solves the equation $Mx+q=0$. Consequently, if $(\AVI)$ is an unconstrained problem, then $$\Sol(\AVI)=\left\lbrace x\in \R^n: Mx+q=0\right\rbrace .$$
Moreover, if $M$ is a nondegenerate matrix, then $x=-M^{-1}q$ is the unique solution of $(\AVI)$, where $M^{-1}$ is the inverse of $M$.
\end{remark}

One says that $(\VVI)$ is an {\it affine} vector variational inequality 
if $K$ is a polyhedral convex set and there exist matrices
$M_i\in\mathbb R^{n\times n}$ and vectors $q_i\in\mathbb R^n$
$(i=1,\dots,m)$ such that $F_i(x)=M_ix+q_i$ for all $i=1,\dots,m$
and $x\in K$. In that case, the problem is denoted
by $(\AVVI)$. Herein, the parametric problem $(\AVI)_{\xi}$ of $(\AVVI)$ is defined by  $$F_{\xi}(x)= \displaystyle\sum_{i=1}^m\xi_iF_i(x)=M_{\xi}x+q_{\xi},$$ where $M_{\xi}=\displaystyle\sum_{i=1}^m\xi_iM_i$ and $q_{\xi}=\displaystyle\sum_{i=1}^m\xi_iq_i$. 
The problem $(\AVVI)$ is said to be \textit{monotone} if all the operators $F_i(x)=M_ix+q_i$ ($i=1,\dots,m$) are monotone. If all the matrices $M_i$ are skew-symmetric, then we will say that $(\AVVI)$ is a \textit{skew-symmetric} affine vector variational inequality.

To every index set $\alpha\subset I$, where $I:=\{1,\dots,p\}$, we associate  the following {\em pseudo-face}
$$\mathcal{F}_\alpha:=\left\{x\in {\mathbb R}^n\,:\, \sum_{j=1}^{n}a_{ij}x_j=b_i\ \,
\forall i\in\alpha,\ \, \sum_{j=1}^{n}a_{ij}x_j> b_i\ \,\forall i\notin\alpha\right\}$$ of $K$, where $a_{ij}$ is the element in the $i$-th row and the $j$-th column of $A$, and $b_i$ denotes the $i$-th component of $b$.
Clearly, $\F_{\emptyset}=\inte K$, $K=\displaystyle\bigcup_{\alpha\subset I}\F_{\alpha}$ and $\F_{\alpha}\cap \F_{\alpha'}=\emptyset$ with $\alpha \neq \alpha'$. 
Since $\Sol(\AVVI)\subset\Sol^w(\AVVI)\subset K$,  we have 
\begin{equation}\label{quasiface}
\Sol^w(\VVI)=\bigcup_{\alpha\subset I}\left[ \Sol^w(\VVI)\cap \F_{\alpha}\right], \ \Sol(\VVI)=\bigcup_{\alpha\subset I}\left[ \Sol(\VVI)\cap \F_{\alpha}\right].
\end{equation}

A topological space $S$ is said to be \textit{connected} if it cannot be represented as $S=U\cup V$, where $U$ and $V$ are disjoint nonempty open sets of $S$. A nonempty subset $A\subset S$ is said to be a \textit{connected component} of $S$ if $A$, equipped with the induced topology, is connected and it is not a proper subset of any connected subset of $S$. The cardinal number of the set of connected components of $S$ is denoted by $\chi(S)$. 

\begin{remark} Let $m$ denote the number of criteria in $(\AVVI)$ and $p$ stand for the number of linear constraints defining $K$. Then, according to \cite[Theorem~3.3]{Hieu16}, the estimate
	\begin{equation}\label{affine_bounds}
\max\left\lbrace \chi(\Sol(\AVVI)),\chi(\Sol^w(\AVVI)) \right\rbrace \leq 2\times3^{2m+2n+3p+1}
\end{equation} holds.
\end{remark}

We are interested in evaluating the number of connected components in the solution sets of monotone AVVIs. Several upper bounds which are strictly smaller than that in \eqref{affine_bounds} will be established.

\section{Bicriteria skew-symmetric affine vector variational inequalities}

The class of skew-symmetric AVVIs encompassed the class of the AVVIs used  for studying  linear fractional vector optimization problems. In addition, the topological structure of the solution sets of skew-symmetric AVVIs is richer and more interesting than that of symmetric monotone AVVIs.

This section is devoted to bicriteria skew-symmetric AVVIs. We will obtain several estimates for the number of connected components of the solution sets of the problems.
\subsection{Unconstrained problems}
We say that $(\AVVI)$ is \textit{nondegenerate} if there exists at least one parameter $\xi\in\Delta_m$ such that
$\det M_{\xi}\neq 0$.

The proof of next result is based on the Cayley Theorem saying that the determinant of a skew-symmetric matrix $A=(a_{ij})$ is the square of the Pfaffian of $A$, which a polynomial of the entries $a_{ij}$.

\begin{theorem}\label{skew}
Consider the problem $(\AVVI)$ with $m=2$, $n\geq 2$, and $n$ being even. If the affine vector variational inequality is unconstrained, skew-symmetric, and nondegenerate, then
	\begin{equation}\label{skew1}
\max\left\lbrace \chi(\Sol(\AVVI)),\chi(\Sol^w(\AVVI)) \right\rbrace \leq n+1.
	\end{equation}
\end{theorem}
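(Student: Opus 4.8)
Since the problem is unconstrained, Remark~\ref{remark_intK} applies to every scalarized problem: for each $\xi = (\xi_1,\xi_2) \in \Delta_2$ we have $\Sol(\AVI)_\xi = \{x \in \R^n : M_\xi x + q_\xi = 0\}$, where $M_\xi = \xi_1 M_1 + \xi_2 M_2$ is again skew-symmetric. Parametrize $\Delta_2$ by a single scalar $t \in [0,1]$ via $\xi_1 = 1-t$, $\xi_2 = t$, and write $M_t := M_{(1-t,t)}$, $q_t := q_{(1-t,t)}$. By Theorem~\ref{thm_scalarization} (the polyhedral case, with $K = \R^n$), the plan is to analyze
\[
\Sol^w(\AVVI) \;=\; \bigcup_{t\in[0,1]} \Sol(\AVI)_t, \qquad \Sol(\AVVI) \;=\; \bigcup_{t\in(0,1)} \Sol(\AVI)_t ,
\]
so that it suffices to bound $\chi$ of the larger set $\Sol^w(\AVVI)$. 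The key player is the polynomial $d(t) := \det M_t = (\pf M_t)^2$, where $\pf$ denotes the Pfaffian; since $n$ is even this is a polynomial in $t$ of degree at most $n$ (each entry of $M_t$ is affine in $t$, and $\pf$ has degree $n/2$ in the entries, so $\pf M_t$ has degree $\le n/2$ and $d(t)$ has degree $\le n$). Nondegeneracy means $d \not\equiv 0$.

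The strategy is to split $[0,1]$ according to the sign of $d(t)$. On the open set $U := \{t : d(t) \neq 0\}$, the matrix $M_t$ is invertible and $\Sol(\AVI)_t = \{-M_t^{-1} q_t\}$ is a single point depending continuously (indeed rationally) on $t$; hence on each connected component (open subinterval) of $U$ the union of these singletons traces out a connected arc. Since $d$ is a nonzero polynomial of degree $\le n$, it has at most $n$ roots in $[0,1]$, so $U \cap [0,1]$ has at most $n+1$ connected components (open subintervals); call their number $k \le n+1$. The contribution of $U$ to $\Sol^w(\AVVI)$ therefore lies in at most $k \le n+1$ arcs. It remains to handle the finitely many $t$-values $t_1 < \dots < t_r$ (with $r \le n$) where $d(t_j) = 0$: there $M_{t_j}$ is singular and $\Sol(\AVI)_{t_j}$ is either empty (if $q_{t_j} \notin \mathrm{range}\,M_{t_j}$) or an affine subspace of dimension $\ge 1$. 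The crucial observation is that each such affine subspace is connected, and — this is the heart of the argument — it must be glued to one of the arcs coming from the adjacent intervals of $U$, so it does not create a new connected component.

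Thus the main obstacle is the gluing step at the degenerate parameters: one must show that whenever $\Sol(\AVI)_{t_j} \neq \emptyset$, its closure meets (or is limatically connected to) the arc from a neighboring nondegenerate interval, so that the total count of components of $\bigcup_t \Sol(\AVI)_t$ is at most the number $k \le n+1$ of components of $U \cap [0,1]$. I expect this to follow from a limiting/continuity argument: take $x^* \in \Sol(\AVI)_{t_j}$, so $M_{t_j} x^* + q_{t_j} = 0$; for $t$ near $t_j$ in an adjacent interval of $U$, the point $x(t) = -M_t^{-1} q_t$ should be shown to converge to some point of $\Sol(\AVI)_{t_j}$ (using that $M_t x^* + q_t \to 0$ together with a resolvent-type estimate exploiting skew-symmetry, e.g.\ $\|M_t^{-1}\|$ is controlled on the range of $M_t$), and then a connectedness argument within the affine solution set finishes the job. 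Monotonicity (positive semidefiniteness of every $M_\xi$, automatic here by skew-symmetry) guarantees each $\Sol(\AVI)_t$ is convex, which is what makes the fibers themselves connected. Combining: $\Sol^w(\AVVI)$ has at most $k \le n+1$ connected components, and since $\Sol(\AVVI) \subset \Sol^w(\AVVI)$, the same bound holds for it, giving~\eqref{skew1}.
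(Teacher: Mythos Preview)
Your setup is correct and matches the paper, but the ``heart of the argument'' --- the gluing step at degenerate parameters --- is a genuine gap, and in fact it is false. The curve $t\mapsto -M_t^{-1}q_t$ need not stay bounded as $t\to t_j$: in the explicit example of Theorem~\ref{even} (the problem $(\Pone)$), one has $x_i(t)=\bigl(i-(i+1)\xi_1\bigr)^{-1}$, which blows up as $\xi_1\to\xi_1^k=k/(k+1)$, while $\Sol(\AVI)_{\xi_1^k}$ is empty. So neither ``$x(t)$ converges to some point of $\Sol(\AVI)_{t_j}$'' nor ``the arc from one side connects to the arc on the other side'' holds; the $n/2+1$ arcs really are pairwise separated. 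Your proposed resolvent-type control on $\|M_t^{-1}\|$ cannot work here, since skew-symmetry gives no useful lower bound on the spectrum once eigenvalues pass through~$0$.

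Two further issues. First, your final sentence ``since $\Sol(\AVVI)\subset\Sol^w(\AVVI)$, the same bound holds for it'' is invalid: a subset can have strictly more connected components than the ambient set. The paper repeats the argument over $\ri\Delta_2$ instead of $\Delta_2$. Second, you note $d(t)=(\pf M_t)^2$ with $\deg(\pf M_t)\le n/2$ but then count ``at most $n$ roots''; the number of \emph{distinct} roots of $d$ is at most $n/2$, and this sharper count is exactly what makes the argument go through without any gluing. The paper's route is: $\pf M_t$ has $k\le n/2$ distinct zeros, so $U\cap[0,1]$ has at most $k+1$ components (contributing $\le k+1$ connected arcs), and each of the $k$ degenerate fibers $\Sol(\AVI)_{t_j}$ is convex (hence connected, possibly empty) by monotonicity. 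Simply adding gives $\chi(\Sol^w(\AVVI))\le (k+1)+k=2k+1\le n+1$, with no need to connect anything across the singular parameters.
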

\begin{proof} For any $\xi=(\xi_1,\xi_2)\in \Delta_2$, one has $\xi_2=1-\xi_1$ with $\xi_1\in [0,1]$. So, $$\det M_{\xi}=\det\left(\xi_1M_1+(1-\xi_1)M_2\right)$$ is a polynomial in the variable $\xi_1$. Since $M_{\xi}$ is skew-symmetric, according to the Cayley Theorem (see \cite[p.~305]{Lax2007}), there is a polynomial $\pf(\xi_1)$ in the variable $\xi_1$ such that 
	\begin{equation}\label{pfaff}
	\det M_{\xi}=\pf^2(\xi_1).
	\end{equation}
Since $(\AVVI)$ is nondegenerate by our assumption, the polynomial $\pf(\xi_1)$ is not identically null. Suppose that the equation $\pf(\xi_1)=0$ has $k\geq 0$ distinct solutions $\xi_1^1,\xi_1^2,\cdots,\xi_1^{k}$ on $[0,1]$ with $0\leq \xi_1^1<\xi_1^2<\cdots <\xi_1^{k}\leq 1.$ Since the equation (\ref{pfaff}) implies with $\deg\pf(\xi_1)\leq n/2$, we have  $k\leq n/2$. For $$\mathcal{D}:=\{\xi_1\in [0,1]\;:\;\pf(\xi_1)\neq 0\},$$ one has $\chi(\mathcal{D})\leq k+1$. 	
Since  the affine vector variational inequality under consideration is unconstrained, according to Remark \ref{remark_intK}, the multifunction $$\Psi: [0,1]\rightrightarrows\R^n,\quad  \xi_1\mapsto \Phi((\xi_1,1-\xi_1))=\Sol(\VI)_{(\xi_1,1-\xi_1)},$$ is single-valued and continuous on $\mathcal{D}$. Therefore, we obtain $\chi(\Psi(\mathcal{D}))\leq \chi(\mathcal{D})$. 

According to Theorem \ref{thm_scalarization}, one has \begin{equation}\label{skewsolw}
	\Sol^w(\AVVI)=\Psi(\mathcal{D})\cup\Psi(\xi^1_1)\cup\cdots\cup\Psi(\xi^k_1).
	\end{equation}
As $(\AVVI)$ is monotone by our assumption, the sets $\Psi(\xi^i_1)$, $i=1,\dots,k$, are convex. Therefore, the number of connected components of the set in the right-hand side of
\eqref{skewsolw} does not exceed $\chi(\mathcal{D}) +k$. Thus, $$\chi(\Sol^w(\AVVI))\leq \chi(\mathcal{D}) +k\leq 2k+1.$$
Since $k\leq n/2$, this yields $\chi(\Sol^w(\AVVI))\leq n+1.$ 

The inequality $\chi(\Sol(\AVVI))\leq n+1$ can be proved similarly. The only change is that instead of $\xi=(\xi_1,1-\xi_1)\in\Delta_2$ one considers $\xi=(\xi_1,1-\xi_1)\in\ri\Delta_2$. 

Combining the obtained estimates, we get \eqref{skew1}.
\qed
\end{proof}

\begin{remark} Concerning the estimate \eqref{skew1}, we observe that
no \textit{bicriteria} unconstrained skew-symmetric AVVI whose weak Pareto solution set (or, Pareto solution set) possesses $n+1$ connected components has been known in the literature, so far. 
\end{remark}

Theorem \ref{skew} shows that $n+1$ is an upper estimate for the number of connected components of the solution sets of any \textit{bicriteria unconstrained skew-symmetric} AVVI \textit{with $n$ being even}. The following theorem tells us that such an upper estimate cannot be smaller than $\frac{n}{2}+1$.

\begin{theorem}\label{even} For each even number $n\geq 2$, there exists a bicriteria unconstrained skew-symmetric affine vector variational inequality, denoted by $(\Pone)$, such that
	$$\chi(\Sol(\Pone))= \chi(\Sol^w(\Pone))= \frac{n}{2}+1.$$
\end{theorem}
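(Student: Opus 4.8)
The plan is to construct $(\Pone)$ explicitly by exhibiting two skew-symmetric matrices $M_1, M_2 \in \R^{n\times n}$ and two vectors $q_1,q_2\in\R^n$ for which the computation of $\Sol^w(\Pone)$ via the scalarization formula \eqref{scalarization} is fully transparent. The natural choice is to decompose $\R^n$ into $n/2$ coordinate planes and put a $2\times 2$ skew-symmetric block on each plane, tuned so that the Pfaffian $\pf(\xi_1)$ of $M_\xi = \xi_1 M_1 + (1-\xi_1)M_2$ has exactly $n/2$ distinct roots in $[0,1]$. Concretely, on the $j$-th plane take the matrix $M_\xi$ to act as multiplication by a scalar $c_j(\xi_1)$ times the standard rotation generator $\begin{pmatrix} 0 & 1\\ -1 & 0\end{pmatrix}$, where $c_j$ is affine in $\xi_1$ and vanishes at a point $\xi_1^j\in[0,1]$, with the $\xi_1^j$ chosen pairwise distinct. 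Then $\det M_\xi = \prod_{j=1}^{n/2} c_j(\xi_1)^2$, so $\pf(\xi_1)=\prod_j c_j(\xi_1)$ has exactly $k=n/2$ roots, and the degenerate parameters are precisely $\xi_1^1,\dots,\xi_1^{n/2}$; choosing any $\xi_1$ outside this set gives $\det M_\xi\neq 0$, so the problem is nondegenerate.

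Next I would compute the solution sets directly from Remark~\ref{remark_intK}. For $\xi_1\in\mathcal{D}$ (i.e.\ $\pf(\xi_1)\neq 0$), the set $\Sol(\VI)_{(\xi_1,1-\xi_1)}$ is the single point $-M_\xi^{-1}q_\xi$, and as $\xi_1$ ranges over each of the (at most) $n/2+1$ open subintervals of $\mathcal{D}$, this traces out a connected (indeed path-connected) arc; by picking $q_1,q_2$ generically the closures of these arcs are pairwise disjoint. At each degenerate parameter $\xi_1^j$, the set $\Sol(\VI)_{(\xi_1^j,1-\xi_1^j)} = \{x : M_{\xi_1^j}x + q_{\xi_1^j}=0\}$ is an affine subspace of dimension equal to the corank of $M_{\xi_1^j}$, which is $2$ with our block construction, provided $q_{\xi_1^j}$ lies in the range of $M_{\xi_1^j}$ — and I would choose $q_1,q_2$ so that this holds at every $\xi_1^j$, so that each $\Psi(\xi_1^j)$ is a nonempty $2$-dimensional plane. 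The union in \eqref{skewsolw} then has at most $n/2 + 1 + n/2 = n+1$ components a priori, but the key point of the construction is to force each degenerate plane $\Psi(\xi_1^j)$ to intersect (or be glued to the closure of) the neighbouring arcs, so that the whole of $\Sol^w(\Pone)$ collapses to exactly $n/2+1$ components — one "cluster" around each degenerate parameter, absorbing the two adjacent arc-pieces, plus boundary bookkeeping at $\xi_1=0,1$.

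The cleanest way to get exactly $n/2+1$ and not fewer is to arrange the roots $\xi_1^j$ and the data so that consecutive clusters are separated: I would place $\xi_1^1=0$, spread $\xi_1^2,\dots,\xi_1^{n/2}$ in the open interval $(0,1)$, and leave no root at $1$, and then verify by the explicit formula for $-M_\xi^{-1}q_\xi$ that the arc over $(\xi_1^j,\xi_1^{j+1})$ limits into $\Psi(\xi_1^j)$ at one end and into $\Psi(\xi_1^{j+1})$ at the other, so that the $n/2$ planes together with the arcs between them form a single connected chain only if something extra forces it — so instead I would decouple, making the limit of the arc over $(\xi_1^{j},\xi_1^{j+1})$ as $\xi_1\downarrow\xi_1^j$ lie in $\Psi(\xi_1^j)$ but its limit as $\xi_1\uparrow\xi_1^{j+1}$ escape to infinity (by letting the relevant component of $q_\xi$ not be in the range of $M_{\xi_1^{j+1}}$ along that block), so that each $\Psi(\xi_1^j)\cup(\text{arc on its right})$ is one component, closed off, giving precisely $n/2$ such bounded clusters plus one unbounded tail-arc, total $n/2+1$. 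Finally, since the problem is monotone and unconstrained and polyhedral ($K=\R^n$), Theorem~\ref{thm_scalarization} gives $\Sol(\Pone)=\Phi(\ri\Delta_2)$, and because none of the degenerate parameters needs to be an endpoint of $\Delta_2$ (we can take $\xi_1^1>0$ after all by translating), the same count $n/2+1$ holds for $\Sol(\Pone)$; I would then reconcile the two counts so that $\chi(\Sol(\Pone))=\chi(\Sol^w(\Pone))=n/2+1$.

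The main obstacle is the gluing bookkeeping: one must choose $q_1,q_2$ so that at every degenerate $\xi_1^j$ the affine solution set is nonempty (range condition) yet the arcs on the two sides behave asymmetrically enough that the final component count is exactly $n/2+1$ rather than something between $1$ and $n+1$. This is a finite, explicit linear-algebra verification block by block, but it requires care; the block-diagonal structure is what makes it tractable, since each $2\times 2$ block can be analyzed independently and the global solution set is the Cartesian-type assembly of the per-block pictures.
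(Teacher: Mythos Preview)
Your block-diagonal framework (one $2\times 2$ skew block per coordinate plane, with affine coefficient $c_j(\xi_1)$ vanishing at a distinct $\xi_1^j$) is exactly the structure the paper uses; after a coordinate permutation the paper's anti-diagonal matrices $M_1,M_2$ are block-diagonal of this type. The problem is in your component-counting strategy.

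The gluing plan has a genuine gap. In a block $c_j(\xi_1)\bigl(\begin{smallmatrix}0&1\\-1&0\end{smallmatrix}\bigr)$ with affine right-hand side $(a_j(\xi_1),b_j(\xi_1))$, the fiber $\Psi(\xi_1^j)$ is nonempty precisely when $a_j(\xi_1^j)=b_j(\xi_1^j)=0$. But then $a_j,b_j,c_j$ are all affine functions vanishing at the same point, so the block solution $(-b_j/c_j,\,a_j/c_j)$ stays bounded as $\xi_1\to\xi_1^j$ from \emph{either} side and limits into the plane. Hence if every degenerate fiber is nonempty, each arc glues to the planes at both its ends and the whole union is a single connected chain: you get $\chi=1$, not $n/2+1$. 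Your proposed asymmetric fix --- having the arc blow up on one side by taking $q_\xi$ out of the range along that block --- directly contradicts the range condition you imposed a few lines earlier; with affine data you cannot have $\Psi(\xi_1^j)$ nonempty and the arc escaping to infinity as $\xi_1\to\xi_1^j$.

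The paper avoids all of this by doing the opposite: it chooses $q_1=q_2=(-1,\dots,-1)^T$ so that the range condition \emph{fails} at every $\xi_1^k$ (one equation becomes $0=1$). Every degenerate fiber is then empty, so $\Sol^w(\Pone)=g(\mathcal D)$ with $g(\xi_1)=-M_\xi^{-1}q_\xi$. A direct computation shows $g$ is a homeomorphism onto its image (the first coordinate of $x$ recovers $\xi_1$), whence $\chi(\Sol^w(\Pone))=\chi(\mathcal D)=n/2+1$, since $\mathcal D$ is $[0,1]$ minus $n/2$ interior points. Removing the two endpoint images $g(0),g(1)$ gives $\Sol(\Pone)$ with the same count. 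No gluing bookkeeping is needed.
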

\begin{proof} To prove the statement, fix any even number $n\geq 2$. Consider a bicriteria AVVI, denoted by $(\Pone)$, where the  $n\times n$ matrices $M_1,M_2$ are the following anti-diagonal and skew-symmetric matrices 
\begin{equation}\label{matrixM12}M_1=\begin{bmatrix}
	&  &  &&  & \ 1  \\ 
	&   O& & 	  & \ \iddots   &	\\ 
	&     && \ 1   &   &\\ 
	&    &-1 & &  &	\\ 
&  \iddots   & && O  &\\ 
	-1& &&& &  \\ 
	\end{bmatrix}, \  \ M_2=\begin{bmatrix}
	&  &  &&  &-1  \\ 
	&   O& & 	  & \ \iddots   &	\\ 
	&     &&-s   &   &\\ 
	&    & \ s & &  &	\\ 
	& \ \iddots   & && O  &\\ 
	\ 1& &&& &  \\ 
	\end{bmatrix}\end{equation}
with $s=n/2$, and the vectors $q_i=(q_{i1},\dots,q_{in})^T\in \R^{2s}$, $i=1,2$, are defined by
	\begin{equation}\label{matrixq}
q_{1j}=q_{2j}=
-1, \ j=1,...,n.
	\end{equation}
For each $\xi=(\xi_1,1-\xi_1)\in \Delta_2$ with $\xi_1\in [0,1]$,  $M_{\xi}=\xi_1M_1+(1-\xi_1)M_2$ and $q_{\xi}=\xi_1q_1+(1-\xi_1)q_2$ can be rewritten as
	$$M_{\xi}=\begin{bmatrix}
	&  &  &&  & 2\xi_1-1  \\ 
	&   O& & 	  & \iddots \ \   &	\\ 
	&     && (s+1)\xi_1-s   &   &\\ 
	&    &s-(s+1)\xi_1& &  &	\\ 
	& \ \ \iddots   & && O  &\\ 
	1-2\xi_1& &&& &  \\ 
	\end{bmatrix}, \ 
	q_{\xi}=\begin{bmatrix}
	-1 \\ 
	\vdots\\
	-1\\
	-1\\
	\vdots\\
	-1\\
	\end{bmatrix}.$$
The determinant  of $M_\xi$ is $$p(\xi_1)=(2\xi_1-1)^2(3\xi_1-2)^2...((s+1)\xi_1-s)^2.$$
The equation $p(\xi_1)=0$ has $s$ distinct solutions  \begin{equation}\label{xi_k}
\xi_1^k=\frac{k}{k+1},\quad k=1,...,s.
\end{equation}
For $\mathcal{D}:=\{\xi_1\in [0,1]\,:\,p(\xi_1)\neq 0\}$, we have $$\mathcal{D}=\mathcal{D}_0\cup \mathcal{D}_1\cup \cdots \cup \mathcal{D}_s,$$ where $\mathcal{D}_0=\left[0 ,\xi_1^1\right)$, $\mathcal{D}_k=(\xi_1^k,\xi_1^{k+1})$ with $k=1,...,s-1$ and $\mathcal{D}_{s}=(\xi_1^s,1].$
	
According to Remark \ref{remark_intK}, $x\in \R^{2s}$ solves $(\AVI)_{\xi}$ if and only if $M_{\xi}x=-q_{\xi}$. This means that $x$ is a solution of the system
	\begin{equation}\label{system-f0}
	\begin{bmatrix}
	&  &  &&  & 2\xi_1-1  \\ 
	&   O& & 	  & \iddots \ \   &	\\ 
	&     && (s+1)\xi_1-s   &   &\\ 
	&    &s-(s+1)\xi_1& &  &	\\ 
	& \ \ \iddots   & && O  &\\ 
	1-2\xi_1& &&& &  \\ 
	\end{bmatrix} \begin{bmatrix}
	x_1 \\ 
	\vdots\\
	x_s\\
	x_{s+1}\\
	\vdots\\
	x_{2s}
	\end{bmatrix}=\begin{bmatrix}
	1\\ 
	\vdots\\
	1\\
	1\\
	\vdots\\
	1
	\end{bmatrix}. 
	\end{equation}

To calculate the weak Pareto solution set of $(\Pone)$, we will solve the system~\eqref{system-f0} in the following two cases. 
\begin{itemize}
	\item[(a)] $\xi_1\notin \mathcal{D}$. This means $\xi_1=\xi_1^k$ for $k=1,...,s$, then the $k$-th equation of~\eqref{system-f0} becomes $0x_{2s+1-k}=1$. Thus,~\eqref{system-f0} has no solution.
	\item[(b)] $\xi_1\in \mathcal{D}$. The system \eqref{system-f0} leads to 
	\begin{equation}\label{xi_formula}x_i=-x_{2s+1-i}=\dfrac{1}{i-(i+1)\xi_1},\quad \ i=1,...,s.\end{equation}
Denote this unique solution $x=x(\xi_1)$ of \eqref{system-f0} by $g(\xi_1)$. It is easy to see that
	\begin{equation}\label{g_formula}
	g(\xi_1)=\frac{1}{1-2\xi_1}u_1+\frac{1}{2-3\xi_1}u_2+\cdots+\frac{1}{s-(s+1)\xi_1}u_s,
	\end{equation}
where 
\begin{equation}\label{u_formula}
u_1=(1,0,...,0,-1), u_2=(0,1,...,-1,0),..., u_s=(0,...,1,-1,...,0).
\end{equation}
\end{itemize}
Thanks to Theorem \ref{thm_scalarization}, we obtain
$$\Sol^w(\Pone)=g(\mathcal{D})=g(\mathcal{D}_0)\cup g(\mathcal{D}_1)\cup\cdots\cup g(\mathcal{D}_s).$$ Let us prove that the mapping $g:\mathcal{D}\to \Sol^w(\Pone)$ is a  homeomorphism. Indeed, on each $\mathcal{D}_k$, all the functions $$f_i(\xi_1):=\frac{1}{i-(i+1)\xi_1},\quad i=1,...,s$$
 are continuous. Thus, by \eqref{g_formula} and \eqref{u_formula}, $g$ is continuous on $\mathcal{D}$. The fact that $g$ is injective on  $\mathcal{D}$ can be easily verified. Hence, $g$ is a bijection between $\mathcal{D}$ and $g(\mathcal{D})$. 
If $x\in g(\mathcal{D})$, then $x=(x_1,...,x_s,-x_s,...,-x_1)$, where the coordinates $x_i$ are given by \eqref{xi_formula}. In particular, $x_1=\dfrac{1}{1-2\xi_1}$. Hence,
	 $$\xi_1=h(x_1,...,x_n):=\frac{x_1-1}{2x_1}.$$ As $h$ is obviously continuous on $g(\mathcal{D})$ and it is the inverse of $g$, we conclude that $g:\mathcal{D}\to \Sol^w(\Pone)$ is a homeomorphism. Therefore, we have $$\chi(\Sol^w(\Pone))=\chi(\mathcal{D})=s+1.$$
	
Clearly, $\Sol(\Pone)=\Sol^w(\Pone)\setminus\{g(0),g(1)\}$ where  $$g(0)=(1,...,\frac{1}{s},-\frac{1}{s}...,-1),\quad g(1)=(-1,...,-1,1,...,1).$$
The above argument shows that $g:\mathcal{D}\setminus\{0,1\}\to \Sol(\Pone)$ is a homeomorphism. Then $\Sol(\Pone)$ also has $s+1$ connected components. \qed
\end{proof}
\begin{remark} In \cite{YY2011}, there is a conjecture that the maximal number of connected components of the solution sets of a monotone AVVI equals to $m$ - the number of the criteria in the problem. Theorem \ref{even} shows that the conjecture is wrong when $m=2$ and $n\geq 4$ because, in that case,  $\frac{n}{2}+1\geq 3>m$.
\end{remark}
\begin{remark} As shown by Yen and Phuong \cite{Yen_Phuong_2000}, the necessary and sufficient condition for a point to be a Pareto solution of a linear fractional vector optimization problem (LFVOP) can be regarded as the condition for that point to be a Pareto solution of a skew-symmetric AVVI. In \cite{Yen_Phuong_2000}, the authors also observed that the necessary and sufficient condition for a point to be a weak Pareto solution of a linear fractional vector optimization problem (LFVOP) can be regarded as the condition for that point to be a weak Pareto solution of a skew-symmetric AVVI. For any bicriteria LFVOP, the intersection of the Pareto solution set with any pseudo-face of the polyhedral convex constraint set is a convex set. This result implies that the Pareto solution set of problem  $(\Pone)$ in the proof of Theorem \ref{even} does coincide with the Pareto solution set of any bicriteria LFVOP with $n\geq 2$. Indeed, since  $(\Pone)$ is an unconstrained bicriteria AVVI, the constraint set is $K=\R^n$. Hence the unique pseudo-face of $K$ is $\R^n$. So, if $\Sol(\Pone)$  coincides with the Pareto solution set of a bicriteria LFVOP with $K=\R^n$ and $n\geq 2$, then $\Sol(\Pone)\cap \R^n=\Sol(\Pone)$ must be convex. But, this is not true. Thus, the question whether the maximal number of connected components of the solution sets of a LFVOP with $m$ criteria cannot exceed $m$, which was raised in \cite{HPY_2002}, remains unresolved. 
\end{remark}
\subsection{Constrained problems}
Recall that the problem $(\Pone)$ in the proof of Theorem \ref{even} is unconstrained. There is a natural question: \textit{What happens with the maximal number of connected components of the solution sets of a monotone AVVI when the number of constrains increase?} Observe that, if $K$ is compact, then both Pareto solution set and weak Pareto solution set of the given monotone AVVI are connected \cite[Theorem 4.1]{YY2011}. 

\begin{theorem}\label{evenn1} For any even number $n\geq 2$ and integer $p$ with $1 \leq p \leq n/2$, there exists a bicriteria skew-symmetric affine vector variational inequality, denoted by  $(\Pa_p)$, such that
$$\chi(\Sol(\Pa_p))=  \chi(\Sol^w(\Pa_p))= \frac{n}{2}+p+1.$$
\end{theorem}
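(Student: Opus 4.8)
\section*{Proof proposal}

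The plan is to produce $(\Pa_p)$ explicitly, as a bicriteria skew-symmetric affine operator on $\R^n$ of the same type as the one used in Theorem~\ref{even} (possibly after a mild modification of the matrices $M_1,M_2$ and the vectors $q_1,q_2$), equipped with a suitably chosen polyhedral constraint set $K=\{x\in\R^n\,:\,Ax\geq b\}$. The target count $\frac n2+p+1$ will be split as $\bigl(\frac n2+1\bigr)$ connected components coming from the ``bulk'' part of the solution set, plus exactly $p$ additional components created by the constraints. The analysis relies only on tools already available: the scalarization formula of Theorem~\ref{thm_scalarization}, which gives $\Sol^w(\Pa_p)=\bigcup_{\xi_1\in[0,1]}\Sol(\AVI)_{(\xi_1,1-\xi_1)}$ and $\Sol(\Pa_p)=\bigcup_{\xi_1\in(0,1)}\Sol(\AVI)_{(\xi_1,1-\xi_1)}$; the Lagrange multiplier characterization of Theorem~\ref{Lagrange multipliers rule}, which describes $\Sol(\AVI)_\xi$ on every pseudo-face $\F_\alpha$ of $K$; and the decomposition \eqref{quasiface}. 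The skew-symmetry hypothesis enters through the observation that whenever $M_\xi$ is nonsingular, $M_\xi^{-1}$ is again skew-symmetric, so $a^{\top}M_\xi^{-1}a=0$ for every $a\in\R^n$; monotonicity enters through the convexity of each $\Sol(\AVI)_\xi$.

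First I would determine, for each fixed $\xi_1$, the shape of $\Sol(\AVI)_{(\xi_1,1-\xi_1)}$ relative to $K$. On $\inte K$ it reduces to the single point $g(\xi_1):=-M_{\xi_1}^{-1}q_{\xi_1}$ when $g(\xi_1)\in\inte K$ (and is empty in $\inte K$ otherwise), so the bulk contribution $\bigcup_{\xi_1}\{g(\xi_1)\}\cap\inte K$ is precisely the feasible part of the rational curve $g$ analysed in the proof of Theorem~\ref{even}, which away from its $\le n/2$ poles $\xi_1^k=k/(k+1)$ consists of $\frac n2+1$ arcs. On a pseudo-face $\F_\alpha$ the Lagrange system of Theorem~\ref{Lagrange multipliers rule}, after substituting $x=g(\xi_1)+\sum_{i\in\alpha}\lambda_i M_{\xi_1}^{-1}a_i$ and using $a_i^{\top}M_{\xi_1}^{-1}a_i=0$, becomes a linear system in the multipliers whose coefficient matrix $\bigl(a_i^{\top}M_{\xi_1}^{-1}a_j\bigr)_{i,j\in\alpha}$ is \emph{skew-symmetric}; this forces the contributions on single-constraint pseudo-faces to be ``whiskers'' attached to the bulk curve, hence not creating new components, while genuinely isolated contributions can occur only on pseudo-faces where an even number of constraints is simultaneously active. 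Using that $p\le n/2$ (so that there is enough room in $\R^n$ for the needed faces), I would then choose $A$ and $b$ so that (i) the feasible part of the bulk curve still has exactly $\frac n2+1$ connected components, none destroyed and none merged, and (ii) there are exactly $p$ further connected components, each supported on a lower-dimensional pseudo-face of $K$ and disjoint, in the induced topology, from the bulk curve and from one another, with convexity from monotonicity ensuring each such extra piece is connected. Counting gives $\chi(\Sol^w(\Pa_p))=\frac n2+p+1$; the equality $\chi(\Sol(\Pa_p))=\frac n2+p+1$ then follows by repeating the argument with $\xi_1$ ranging over $(0,1)$ instead of $[0,1]$, as in the last paragraph of the proof of Theorem~\ref{skew}, once one checks that excluding $\xi_1\in\{0,1\}$ only removes finitely many points and changes no connected-component count.

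The main obstacle is items (i)--(ii): arranging the constraint set so that exactly $p$ components are gained and none is lost or merged. The difficulty is that a single added inequality can, at the same time, split one arc of the bulk curve into two and annihilate a neighbouring arc (because a skew-symmetric matrix $M_\xi$ forces $a^{\top}M_\xi^{-1}a=0$, so a hyperplane bounding $K$ cannot sprout new solutions transversally, and the blow-up directions of the arcs at the poles are coupled); the construction therefore has to balance these effects and, in addition, must account completely for the solutions on each of the finitely many pseudo-faces of $K$. This bookkeeping is exactly where the skew-symmetry of the multiplier matrix does the decisive work, while everything else is a finite, if lengthy, case analysis of rational functions of the single scalar parameter $\xi_1$.
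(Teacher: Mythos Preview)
Your structural claim --- that single-constraint pseudo-faces can only produce ``whiskers'' attached to the bulk curve while genuinely isolated extra components require an \emph{even} number of active constraints --- is wrong, and the error is that your substitution $x=g(\xi_1)+\sum_{i\in\alpha}\lambda_i M_{\xi_1}^{-1}a_i$ presupposes $M_{\xi_1}$ invertible. It therefore says nothing at the poles $\xi_1=\xi_1^k=k/(k+1)$, and those are exactly where the extra components live. In the paper's construction (the same $M_1,M_2,q_1,q_2$ as in Theorem~\ref{even}, with $K_p=\{x:-x_k-x_{2s+1-k}\geq-1,\ k=1,\dots,p\}$), at $\xi_1=\xi_1^k$ the unconstrained equation $M_\xi x+q_\xi=0$ is inconsistent, but the Lagrange system $M_\xi x-A^{\top}\lambda+q_\xi=0$ with $\lambda_k=1$ and all other $\lambda_j=0$ admits an entire affine line $S_k$ of solutions lying in the \emph{single}-constraint pseudo-face $\F_{\{k\}}$. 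Since $g(\mathcal{D})\subset\{x_k+x_{2s+1-k}=0\}$ while $S_k\subset\{x_k+x_{2s+1-k}=1\}$, each $S_k$ is separated from the bulk curve and is a new connected component; pseudo-faces with $|\alpha|\geq 2$ contribute nothing.

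So the mechanism is the opposite of what you predicted: one constraint creates one new component, because activating the $k$-th constraint resurrects the parameter value $\xi_1^k$ that the bulk had lost. Your even-$|\alpha|$ heuristic, drawn from the skew-symmetric Gram matrix $(a_i^{\top}M_\xi^{-1}a_j)_{i,j\in\alpha}$, is defined only on $\mathcal{D}$; the decisive case analysis happens off $\mathcal{D}$, where that matrix does not exist. Note also that for $p=1$ there are no pseudo-faces with even $|\alpha|\geq 2$ at all, so your proposed mechanism cannot produce any extra component in that case. To repair the argument, treat the singular parameters $\xi_1^k$ directly in the Lagrange system \eqref{system}, pseudo-face by pseudo-face, rather than factoring through $M_\xi^{-1}$.
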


\begin{proof} Suppose that $n=2s$ with $s\geq 1$. For each $1 \leq p \leq s$, consider the bicriteria skew-symmetric AVVI $(\Pa_p)$ with $M_1,M_2$ described in \eqref{matrixM12}, $q_1,q_2$ given by \eqref{matrixq}, and the constrain set $K_p$ is defined by
	$$K_p=\{x\in \R^n:-x_1-x_{2s}\geq -1, \cdots, -x_p-x_{2s+1-p}\geq -1\}.$$
We will show that  $\chi(\Sol^w(\Pa_p))=\chi(\Sol(\Pa_p))=s+p+1$. 

\textit{Since the case $p=s$ is the most complicated, we will deal with it. The cases where $p\in\{1,\dots, s-1\}$ can be treated similarly.}

The constraint $Ax\geq b$ describing $K_s$, with $A\in \R^{s\times 2s}$ and $b\in \R^{s\times 1}$, can be rewritten as 
	$$	Ax=\begin{bmatrix}
-1 &  & O & O&  &-1 \\ 
	 &  \ddots  &   & &\iddots& \\ 
	O&   & -1   &-1& & O\\ 
	\end{bmatrix}\begin{bmatrix}
	x_1 \\ 
	\vdots\\
	x_{2s}
	\end{bmatrix}\geq \begin{bmatrix}
	-1 \\ 
	\vdots\\
	-1
	\end{bmatrix}=b.$$
In accordance with Theorem \ref{thm_scalarization} and formula \eqref{quasiface}, we will find solutions of $(\Pa_s)$ on each pseudo-face $\F_{\alpha}$ of $K_s$ by using the formula
 $$\Sol^w(\Ptwo)\cap \F_{\alpha}=\bigcup_{\xi\in \Delta_2}({\Sol(\AVI)}_\xi\cap \F_{\alpha}).$$
 
\noindent \textsc{Claim 1.} \textit{One has $\Sol^w(\Pa_s)\cap \F_{\emptyset}=g(\mathcal{D})$, where $\mathcal{D}$ is defined as in the proof of Theorem $\ref{even}$ and $g$ is given by \eqref{g_formula}.}

\smallskip
Indeed, by Remark \ref{remark_intK}, $x\in \Sol^w(\Pa_s) \cap \F_{\emptyset}$ if and only if $x\in\F_{\emptyset}$ and \eqref{system-f0} is satisfied. The arguments in the proof of Theorem \ref{even} can be used again. In particular, if $\xi_1\notin \mathcal{D}$, then system \eqref{system-f0} has no solution. If $\xi_1\in \mathcal{D}$, then $x=g(\xi_1)$ satisfies \eqref{system-f0} and the conditions $-x_i-x_{2s+1-i}>-1$, $i=1,...,s$. Thus, the formula $\Sol^w(\Pa_s)\cap \F_{\emptyset}=g(\mathcal{D})$ is valid.

\smallskip
\noindent \textsc{Claim 2.} \textit{For each $k\in\{1,...,s\}$, one has
\begin{equation}\label{Si}
	\Sol^w(\Pa_s)\cap \F_{\{k\}}=\left\lbrace e_{2s+1-k}+ tu_k+\sum_{1\leq j \leq s}^{j\neq k}\beta_{kj}u_j, t\in \R\right\rbrace=:S_k,
	\end{equation}
where the vectors $u_j$ are given in \eqref{u_formula}, $e_i$ is the $i$-th unit vector of $\mathbb R^n$, and
$$\beta_{kj}:=\frac{1}{j-(j+1)\xi_1^k},\quad \ j\in\{1,...,s\}\setminus\{k\}.$$}

\smallskip
We only give the proof for the case $k=1$, because other cases can be treated similarly. Note that
	$$\F_{\{1\}}=\{x\in \R^n:-x_1-x_{2s}= -1,-x_j -x_{2s+1-j}> -1, \ 2\leq j\leq s\}.$$ Applying Theorem \ref{Lagrange multipliers rule} to the affine variational inequality $(\AVI)_\xi$, we see that the second equation in
\eqref{system} gives $\lambda_j =0$ for all $j=2,..., s$. Meanwhile, the equation $M_{\xi}x-A^T\lambda+q_{\xi} =0$, which corresponds to the first relation in \eqref{system}, can be written as
\begin{equation}\label{system-f1}
	\begin{bmatrix}
O&  &  &&  & 2\xi_1-1\\ 
&   & & 	  & 3\xi_1-2 &	\\ 
&    &\iddots  & &  &	\\ 
&  2-3\xi_1& &&   &\\ 
1-2\xi_1& &&& &  O\\ 
\end{bmatrix} \begin{bmatrix}
x_1 \\ 
x_2 \\ 
\vdots\\
x_{2s-1}\\
x_{2s}
\end{bmatrix}=\begin{bmatrix}
1-\lambda_1\\ 
1\\
\vdots\\
1\\
1-\lambda_1 \\
\end{bmatrix}.
\end{equation}

Now, let the polynomial $p(\xi_1)$ and its roots $\xi_1^k$, $k=1,...,s$, be defined as in the proof of Theorem \ref{even}. In accordance with \eqref{xi_k}, we have $\xi_1^1=1/2$. Consider the following two cases.

\smallskip
\noindent \textsc{Case 1:} $\xi_1 \neq \xi_1^1$. The first and the last equations  of \eqref{system-f1} yield $x_{1}=-x_{2s}$. So, $-x_1-x_{2s}\neq-1$. This means that \eqref{system-f1} has no solution in $\F_{\{1\}}$. 

\smallskip
\noindent \textsc{Case 2:} $\xi_1=\xi_1^1$. Clearly, if \eqref{system-f1} has a solution $x=(x_1,...,x_n)\in\mathbb R^n$, then one must have $\lambda_1=1$. For $\lambda_1=1$, \eqref{system-f1} is equivalent to saying that
	$$x_j=-x_{2s+1-j}=\beta_{1j}=\dfrac{1}{j-(j+1)\xi_1^1},  \ j=2,...,s,$$ and  $x_1=t,\, x_{2s}=t'$, where $t,t'\in \R$ can be chosen arbitrarily.
	If $x\in\F_{\{1\}}$, then $-x_1-x_{2s}=-1$. Therefore, 
	$$\Sol^w(\Pa_s)\cap \F_{\{1\}}=\left\lbrace e_{2s}+ tu_1+\sum_{2\leq j \leq s}\beta_{1j}u_j, \ t\in \R\right\rbrace=S_1.$$

The claim is justified.

\smallskip
\noindent \textsc{Claim 3.} \textit{We have $\Sol^w(\Pa_s)\cap \F_{\alpha}=\emptyset$ for all index set $\alpha$ with $|\alpha|\geq 2$.}
\smallskip

We only give the proof for the case $\alpha=\{1,2\}$, because other cases can be analyzed in the same way. The pseudo-face $\F_{\{1,2\}}$ of $K_s$ is given by
	$$\{x\in \R^n:-x_1-x_{2s}= -1,-x_2-x_{2s-1}= -1,-x_j -x_{2s+1-j}> -1, j\neq 1,2\}.$$
Applying Theorem \ref{Lagrange multipliers rule} to $(\AVI)_\xi$ gives $\lambda_j =0$ with $3 \leq j\leq s$. In addition, the equation $M_{\xi}x-A^T\lambda+q_{\xi} =0$, which corresponds to the first relation in \eqref{system}, can be written as
\begin{equation}\label{system-f2}
	\begin{bmatrix}
O&  &  &&  & 2\xi_1-1\\ 
&   & & 	  & 3\xi_1-2 &	\\ 
&    &\iddots  & &  &	\\ 
&  2-3\xi_1& &&   &\\ 
1-2\xi_1& &&& &  O\\ 
\end{bmatrix}\begin{bmatrix}
x_1 \\ 
x_2 \\ 
\vdots\\
x_{2s-1}\\
x_{2s}
\end{bmatrix}=\begin{bmatrix}
1-\lambda_1\\
1-\lambda_2 \\ 
\vdots\\
	1-\lambda_2 \\ 
	1-\lambda_1\\
	\end{bmatrix}. 
	\end{equation}

Suppose that $x=(x_1,...,x_{2s})$ is a solution of \eqref{system-f2}. For a value $\xi_1\in [0,1]$, since the situations $\xi_1=\xi_1^1$ and $\xi_1=\xi_1^2$ cannot occur simultaneously, one must have either $\xi_1\neq \xi_1^1$, or $\xi_1\neq \xi_1^2$. If $\xi_1\neq \xi_1^1$, then the first and the last equations  of \eqref{system-f2} force $x_{1}=-x_{2s}$. So, $-x_1-x_{2s}\neq-1$. This means that  $x\notin\F_{\{1,2\}}$. If $\xi_1\neq \xi_1^2$, then the second and the $(2s-1)$-th equations of \eqref{system-f2} imply $x_2=-x_{2s-1}$. So, $-x_2-x_{2s-1}\neq-1$. This means that $x\notin\F_{\{1,2\}}$. Thus, the intersection of the weak Pareto solution set with $\F_{\{1,2\}}$ is empty. 

Let $\mathcal{D}_0,\, \mathcal{D}_1,...,\mathcal{D}_s$ be defined as in the proof of Theorem \ref{even}. Recalling that $\mathcal{D}=\mathcal{D}_0\cup \mathcal{D}_1\cup \cdots \cup \mathcal{D}_s$ and summarizing all the above, we obtain
\begin{equation}\label{solwps}
	\Sol^w(\Pa_s)=g(\mathcal{D}_0)\cup\cdots\cup g(\mathcal{D}_s)\cup  S_1\cup\cdots\cup S_s.
	\end{equation}
From \eqref{Si} it follows that $S_1, ...,S_s$ the straight lines. Since the  pseudo-faces $\F_{\{k\}}$, $k=1,...,s$, of $K_s$ are pairwise disjoint, the straight lines $S_1, ...,S_s$ are disjoint. Consider the hyperplane $$H_i:=\{x\in\R^n:-x_i-x_{2s+1-i}=-1\}\quad (i=1,...,s).$$ By \eqref{g_formula} and an elementary calculation, we can show that the distance from the point $g(\xi_1)$ to each $H_i$, $i=1,...,s$, is $\frac{\sqrt{2}}{2}$. Since $S_i\subset H_i$, it follows that the distance from every point of $g(\mathcal{D})$ to $S_1\cup\cdots\cup S_s$ is greater or equal $\frac{\sqrt{2}}{2}$. In the proof of Theorem \ref{evenn1}, we have shown that $g(\mathcal{D}_0),..., g(\mathcal{D}_s)$ are connected components of $g(\mathcal{D})$. Consequently, every member of the union of sets on the right-hand side of \eqref{solwps} is a connected component of $\Sol^w(\Pa_s)$. It follows that $$\chi(\Sol^w(\Pa_s))=(s+1)+s=n+1.$$
	
It remains to prove that $\chi(\Sol(\Pa_s))= n+1.$ By Theorem \ref{thm_scalarization}, 
	\begin{equation*} \Sol(\Pa_s)=\bigcup_{\xi\in{\ri}\Delta_2}{\Sol(\VI)}_\xi. \end{equation*} So, for every pseudo-face $\F_{\alpha}$ of $K_s$, one has
	$$\Sol(\Ptwo)\cap \F_{\alpha}=\bigcup_{\xi\in \ri\Delta_2}({\Sol(\AVI)}_\xi\cap \F_{\alpha}).$$
	Therefore, looking back to the proofs of the above Claims 1 and 2 justifies the following results:
	
	\smallskip
	\noindent \textsc{Claim 1'.} \textit{It holds that $\Sol(\Pa_s)\cap \F_{\emptyset}=g(\mathcal{D})\setminus\{g(0),g(1)\}$.}
	
	\smallskip
	\noindent \textsc{Claim 2'.} \textit{For each $k\in\{1,...,s\}$, $\Sol(\Pa_s)\cap \F_{\{k\}}=S_k$.}
	
	\smallskip
	Since $\Sol(\Pa_s)\subset \Sol^w(\Pa_s)$, combining these facts with the above Claim 3, we get $\Sol(\Pa_s)=\Sol^w(\Pa_s)\setminus\{g(0),g(1)\}$. Hence, $\chi(\Sol(\Pa_s))=n+1.$ \qed
\end{proof}

\begin{remark} Applied to the case of the problem $(\Pa_p)$, the argument that was used for dealing with $(\Pa_s)$ in the proof of Theorem \ref{evenn1} yields
	$$\Sol^w(\Pa_p)=g(\mathcal{D}_0)\cup\cdots\cup g(\mathcal{D}_s)\cup  S_1\cup\cdots\cup S_p,$$ where the straight lines $$S_k=\left\lbrace e_{2s+1-k}+ tu_k+\sum_{1\leq j \leq s}^{j\neq k}\beta_{kj}u_j, t\in \R\right\rbrace\quad (k=1,...,s)$$ are given by \eqref{Si}. The Pareto solution set is
	$$\Sol(\Pa_p)=\left( g(\mathcal{D}_0)\cup\cdots\cup g(\mathcal{D}_s)\cup  S_1\cup\cdots\cup S_p\right)\setminus\{g(0),g(1)\}.$$
 Both solution sets have $s+p+1$ connected components.
\end{remark}

\section{The maximum of the number of connected components}

Denote by $\chi(m,n,p)$ the \textit{maximum} of the numbers of connected components of the Pareto solution sets of monotone AVVIs which have $n$ variables, $m$ criteria, and $p$ linear constraints. For the weak Pareto solution sets, the corresponding number is denoted by $\chi^w(m,n,p)$.

\begin{remark} According to \eqref{affine_bounds}, both numbers $\chi(m,n,p)$ and $\chi^w(m,n,p)$ are less or equal to $2\times3^{2m+2n+3p+1}$. Meanwhile, Theorem \ref{even} and Theorem \ref{evenn1}  state that for any even number $n\geq 2$ and integer $p$ with $0 \leq p \leq n/2$, 
$$\frac{n}{2}+p+1\leq \min\left\{\chi(2,n,p), \chi^w(2,n,p)\right\}.$$\end{remark}

\begin{remark} Theorem 9.13 in \cite{Yen2012} shows that there exists a linear fractional vector optimization problem $(\LFVOP)$ whose Pareto solution set coincides with the weak Pareto solution set and has exactly $m$ connected components. We observe that the skew-symmetric AVVI expressing the necessary and sufficient optimality of this problem $(\LFVOP)$ has $m=n$ and $p=n+1$.
Hence, for $n\geq 2$, we have
$$n\leq \min\left\lbrace \chi(n,n,n+1), \chi^w(n,n,n+1)\right\rbrace.$$
\end{remark}

Now, let us investigate the change of $\chi(m,n,p)$ and of $\chi^w(m,n,p)$) w.r.t. the increase of each one of the parameters $n$ and $m$.

\begin{proposition}\label{nondecreasingn} For fixed parameters $m\geq 1$ and $p\geq 0$, the sequences $\{\chi(m,n,p)\,:\,n\in \N\}$ and $\{\chi^w(m,n,p):n\in \N\}$ are increasing.
\end{proposition}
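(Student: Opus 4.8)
The plan is to show that increasing the number of variables cannot decrease the maximal number of connected components, by exhibiting, for any monotone AVVI that (nearly) realizes $\chi(m,n,p)$ or $\chi^w(m,n,p)$, a monotone AVVI in $n+1$ variables with the same number of constraints and criteria whose solution set has at least as many connected components. The natural construction is a \emph{padding} of the given problem: given monotone $(\AVVI)$ with data $M_i\in\R^{n\times n}$, $q_i\in\R^n$ and $K=\{x\in\R^n:Ax\geq b\}$, define $\widetilde M_i=\begin{bmatrix}M_i & 0\\ 0 & 1\end{bmatrix}\in\R^{(n+1)\times(n+1)}$, $\widetilde q_i=\begin{bmatrix}q_i\\ 0\end{bmatrix}$, and $\widetilde K=\{(x,t)\in\R^{n+1}:Ax\geq b\}$ (the same $p$ constraints, with the new coordinate left free). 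First I would check that each $\widetilde F_i(x,t)=\widetilde M_i(x,t)^T+\widetilde q_i$ is monotone: this is immediate because $\langle \widetilde M_i v,v\rangle=\langle M_i v',v'\rangle+v_{n+1}^2\geq 0$ for $v=(v',v_{n+1})$, so positive semidefiniteness is preserved (and if one prefers to keep skew-symmetry, one can instead pad with a $2\times 2$ skew block, at the cost of going from $n$ to $n+2$; for the stated proposition the $+1$ version with a PSD block suffices).

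Next I would compute the parametric operators: $\widetilde M_\xi=\mathrm{diag}(M_\xi,1)$ and $\widetilde q_\xi=(q_\xi,0)$, so the Lagrange system \eqref{system} for $(\widetilde{\AVI})_\xi$ on $\widetilde K$ splits as the system \eqref{system} for $(\AVI)_\xi$ on $K$ in the variables $x,\lambda$, together with the single scalar equation $t=0$ coming from the last row (the new coordinate lies in the interior direction, so by Remark~\ref{remark_intK}-type reasoning it is forced to solve $1\cdot t+0=0$). Hence for every $\xi\in\Delta_m$ one has $\Sol(\AVI)_\xi\times\{0\}=\Sol(\widetilde{\AVI})_\xi$, and therefore, applying the scalarization formula of Theorem~\ref{thm_scalarization} (valid since $\widetilde K$ is polyhedral), $\Sol^w(\widetilde{\AVVI})=\Sol^w(\AVVI)\times\{0\}$ and $\Sol(\widetilde{\AVVI})=\Sol(\AVVI)\times\{0\}$. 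The map $y\mapsto(y,0)$ is a homeomorphism onto its image, so $\chi(\Sol(\widetilde{\AVVI}))=\chi(\Sol(\AVVI))$ and likewise for the weak sets. Since $\widetilde{\AVVI}$ has $m$ criteria, $n+1$ variables, and $p$ constraints, this gives $\chi(m,n+1,p)\geq\chi(m,n,p)$ and $\chi^w(m,n+1,p)\geq\chi^w(m,n,p)$, i.e. monotonicity in $n$.

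A small technical point to be careful about, and the step I expect to need the most attention, is whether the maximum defining $\chi(m,n,p)$ is actually attained by some problem, and whether it is finite: by \eqref{affine_bounds} the quantity $2\times 3^{2m+2n+3p+1}$ is a finite upper bound, so $\chi(m,n,p)$ is a well-defined natural number and is attained by at least one monotone AVVI with those parameters (or, if one is uneasy about attainment, the argument above shows directly that \emph{every} value realized in dimension $n$ is realized in dimension $n+1$, which already forces $\chi(m,n+1,p)\geq\chi(m,n,p)$ without invoking attainment). One should also note that ``$p$ constraints'' is understood as ``at most $p$'' — or, if exactly $p$ is insisted upon, one simply appends to $\widetilde K$ one more redundant inequality such as $t\geq -1$ (or $0\cdot\widetilde x\geq -1$), which does not change the solution set; I would phrase the padded constraint set so that the constraint count matches whatever convention the paper uses. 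With these remarks in place the proof is complete.
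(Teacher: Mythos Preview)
Your proof is correct and follows the same overall strategy as the paper: embed a monotone AVVI with $n$ variables into one with $n{+}1$ variables so that the (weak) Pareto solution set is carried by the homeomorphism $x\mapsto (x,0)$, and then invoke Theorem~\ref{thm_scalarization}. The paper's padding is slightly different from yours: it sets $G_i(x_1,\dots,x_{n+1}):=F_i(x_1,\dots,x_n)$ (i.e.\ a zero block in place of your $1$) and takes the new constraint set to be $K_Q:=K_P\times\{0\}$, so that the extra coordinate is pinned to $0$ by the constraint set rather than by the KKT system. Your variant is arguably cleaner on the bookkeeping of ``$p$ constraints'': since your $\widetilde K=\{(x,t):Ax\ge b\}$ is described by exactly the same $p$ inequalities, no discussion of redundant constraints is needed, whereas the paper's $K_Q=K_P\times\{0\}$ tacitly uses additional equalities to fix $x_{n+1}=0$. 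Your closing remarks on finiteness/attainment of the maximum (via \eqref{affine_bounds}) are also a nice touch that the paper leaves implicit.
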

\begin{proof} Let $m\geq 1$ and $p\geq 0$ be fixed. We will prove the assertion for the first sequence, because the result for the second one can be obtained in a similar manner. It suffices to show that $$\chi(m,n,p) \leq\chi(m,n+1,p)\ \; \forall n\in \N.$$
Consider a monotone AVVI, denoted by $(\Pa)$, which is defined by some operators $F_i(x)$ for $ i=1,...,m,$ and a constraint set $K_P\subset \R^n$. Suppose that $\chi(\Sol(\Pa))=q$. Based on the problem $(\Pa)$, we will construct another monotone AVVI, denoted by $(\Q)$, which has ${n+1}$ variables such that $\chi(\Sol(\Q))=q$. Namely, let the AVVI problem $(\Q)$ be defined by the operators
	$$G_i(x_1,...,x_{n},x_{n+1}):=F_i(x_1,...,x_{n}),\ \; i=1,...,m,$$ and the constraint set $$K_Q:=\{x'\in \R^{n+1}\;:\;x'=(x_1,...,x_{n},0),\ (x_1,...,x_{n})\in K_P\}.$$
	For each $i=1,...,m$, the operator $G_i$ is affine and monotone on $K_Q$. 	
Let $(\Pa_{\xi})$ (resp., $(\Q_{\xi})$), where $\xi \in \ri\Delta_m$,  be the parametric variational inequality corresponding to  $(\Pa)$ (resp., $(\Q)$). We claim that $x\in \Sol(\Pa_{\xi})$ if and only if $x'\in \Sol(\Q_{\xi})$, where $x'=(x,0)\in \R^{n+1}$. Indeed, since
	$$\left\langle \sum_{i=1}^m\xi_iF_i(x),y-x\right\rangle=\left\langle \sum_{i=1}^m\xi_iG_i(x'),y'-x'\right\rangle\ \;  \forall y\in K_P, \ \forall y'\in K_Q,$$ two conditions 
	 $$	\left\langle  \sum_{i=1}^m\xi_iF_i(x),y-x\right\rangle  \geq 0,\ \; \forall y\in
	K_P,$$
and 
$$\left\langle \sum_{i=1}^m\xi_iG_i(x'),y'-x'\right\rangle  \geq 0,\ \; \forall y'\in K_Q,$$ are equivalent. 
	
Consider the map $h:\R^{n} \to \R^{n+1}$ defined by $h(x_1,...,x_{n}) =(x_1,...,x_{n},0).$ From the last observation one has $h(\Sol(\Pa)_\xi)=\Sol(\Q)_\xi$. Therefore, according to Theorem \ref{thm_scalarization}, one has $h(\Sol(\Pa))=\Sol(\Q)$. As the induced map $h:\R^{n} \to h(\R^{n})$ is a homeomorphism,  the equality  $\chi(\Sol(\Q))=\chi(\Sol(\Pa))=q$ is valid.
By the definition of $\chi(m,n,p)$, one has $\chi(m,n,p) \leq \chi(m,n+1,p)$. \qed
\end{proof}

\begin{proposition}\label{nondecreasingm}  For fixed parameters $n\geq 1$ and $p\geq 0$, the sequences $$\{\chi(m,n,p)\;:\;m\in \N\}\quad {and}\quad \{\chi^w(m,n,p)\;:\;m\in \N\}$$ are increasing. 
\end{proposition}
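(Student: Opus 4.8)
The plan is to adapt the padding argument from the proof of Proposition~\ref{nondecreasingn}, but now enlarging the set of criteria instead of the set of variables. Fix $n\geq 1$ and $p\geq 0$; it suffices to establish $\chi(m,n,p)\leq\chi(m+1,n,p)$ for every $m\in\N$, together with the analogous inequality for $\chi^w$. Since \eqref{affine_bounds} shows that $\chi(m,n,p)$ is finite, this maximum is attained: choose a monotone $(\AVVI)$, denoted $(\Pa)$, with $n$ variables, $m$ criteria given by affine monotone operators $F_i(x)=M_ix+q_i$ ($i=1,\dots,m$), and a polyhedral convex constraint set $K$ defined by $p$ linear inequalities, such that $\chi(\Sol(\Pa))=\chi(m,n,p)=:q$.

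From $(\Pa)$ I would build a monotone $(\AVVI)$ with $m+1$ criteria, denoted $(\Q)$, by \emph{duplicating the last criterion}: put $G_i:=F_i$ for $i=1,\dots,m$, set $G_{m+1}:=F_m$, and keep the same constraint set $K$. Then $(\Q)$ is again an affine monotone vector variational inequality with $n$ variables, $m+1$ criteria, and $p$ linear constraints, whence $\chi(\Sol(\Q))\leq\chi(m+1,n,p)$ and $\chi(\Sol^w(\Q))\leq\chi^w(m+1,n,p)$. The heart of the matter is the comparison of the parametric problems. Consider the linear map $T:\Delta_{m+1}\to\Delta_m$, $T(\xi_1,\dots,\xi_{m+1})=(\xi_1,\dots,\xi_{m-1},\xi_m+\xi_{m+1})$. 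Because $G_{m+1}=F_m$, the data of the parametric problem of $(\Q)$ at $\xi\in\Delta_{m+1}$ coincide with those of $(\Pa)$ at $T(\xi)$, i.e. $M^{\Q}_{\xi}=M^{\Pa}_{T(\xi)}$ and $q^{\Q}_{\xi}=q^{\Pa}_{T(\xi)}$, so the two basic multifunctions satisfy $\Phi_{\Q}(\xi)=\Phi_{\Pa}(T(\xi))$ for all $\xi\in\Delta_{m+1}$. The map $T$ is onto $\Delta_m$ and its restriction is onto $\ri\Delta_m$ (indeed $\eta\in\ri\Delta_m$ equals $T$ of, e.g., $(\eta_1,\dots,\eta_{m-1},\eta_m/2,\eta_m/2)\in\ri\Delta_{m+1}$), so passing to unions over fibers gives
$$\Phi_{\Q}(\Delta_{m+1})=\Phi_{\Pa}(\Delta_m)\quad\text{and}\quad\Phi_{\Q}(\ri\Delta_{m+1})=\Phi_{\Pa}(\ri\Delta_m).$$
Since $K$ is polyhedral convex, Theorem~\ref{thm_scalarization} turns these equalities into $\Sol^w(\Q)=\Sol^w(\Pa)$ and $\Sol(\Q)=\Sol(\Pa)$.

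It then follows that $\chi(\Sol(\Q))=\chi(\Sol(\Pa))=q=\chi(m,n,p)$, which together with $\chi(\Sol(\Q))\leq\chi(m+1,n,p)$ yields $\chi(m,n,p)\leq\chi(m+1,n,p)$; running the same chain with the weak Pareto solution sets gives $\chi^w(m,n,p)\leq\chi^w(m+1,n,p)$, and the proposition follows. I do not expect a real obstacle: the construction is elementary, and the only point deserving a line of care is the surjectivity of the projection $T$ (and of its restriction to the relative interiors of the simplices), since this is exactly what guarantees that no portion of the solution set of $(\Pa)$ is lost when one moves to $(\Q)$.
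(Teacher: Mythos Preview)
Your proof is correct and follows essentially the same approach as the paper: both duplicate the last criterion to pass from $m$ to $m+1$ and then use the scalarization formulas of Theorem~\ref{thm_scalarization} to conclude that the Pareto and weak Pareto solution sets are unchanged. The only cosmetic difference is that you package the two inclusions via the surjective map $T:\Delta_{m+1}\to\Delta_m$, whereas the paper verifies $\Sol(\Pa)=\Sol(\Q)$ by explicit element-chasing in both directions; the underlying argument is identical.
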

\begin{proof}  Let  $n\geq 1$ and $p\geq 0$ be fixed. We will prove that the first sequence is increasing, because the assertion about the second sequence can be proved similarly. To show that $$\chi(m,n,p) \leq\chi(m+1,n,p)\quad \forall m\in \N,$$
we consider a monotone AVVI, denoted by $(\Pa)$, with the affine operators $F_i(x)$, $ i=1,...,m,$ and the polyhedral convex set $K_P\subset \R^n$ being given arbitrarily. Suppose that $\chi(\Sol(\Pa))=q$. Based on $(\Pa)$, we can build another monotone AVVI problem $(\Q)$ having $m+1$ criteria, such that $\chi(\Sol(\Q))=q$. Indeed, let $(\Q)$ be defined by the operators
\begin{equation}\label{solution_eta0}
G_i(x):=\left\{\begin{array}{ll}
F_i(x)& \quad \hbox{ if } \ \ i=1,...,m,  \\
F_{m}(x) & \quad \hbox{ if } \ \ i=m+1,  \\
\end{array}\right.
\end{equation}
and the constraint set $K_Q:=K_P$. Clearly, all the operators $G_i(x)$ are affine and monotone on $K_Q$.
	
For any $\xi \in \ri\Delta_m$ and $\eta\in \ri\Delta_{m+1}$, let  $(\Pa_{\xi})$ and $(\Q_{\eta})$, respectively, denote the parametric variational inequality problems corresponding to  $(\Pa)$ and $(\Q)$. According to Theorem \ref{thm_scalarization}, 
	\begin{equation}\label{new_equation}
	\Sol(\Pa)=\bigcup_{\xi\in\ri\Delta_m}\Sol(\Pa_\xi), \ \ \Sol(\Q)=\bigcup_{\eta\in\ri\Delta_{m+1}}\Sol(\Q_{\eta}).
	\end{equation}
We will show that $\Sol(\Pa)=\Sol(\Q)$. For every $x\in\Sol(\Pa)$, by \eqref{new_equation} we find $\xi\in \ri\Delta_m$ such that $x\in \Sol(\Pa_{\xi})$. Then,
\begin{equation}\label{solution_eta}
\left\langle  \sum_{i=1}^m\xi_iF_i(x),y-x \right\rangle \geq 0  \ \ \forall y\in K_P.
\end{equation}
Note that
	$$\sum_{i=1}^m\xi_iF_i(x)=\sum_{i=1}^{m-1}\xi_iF_i(x)+\frac{\xi_m}{2}F_m(x)+\frac{\xi_m}{2}F_m(x).$$
Define $\eta=(\eta_1,\dots,\eta_{m+1})\in \ri\Delta_{m+1}$ by setting
$$\eta_i=\left\{\begin{array}{ll}
\xi_i& \quad \hbox{ if } \ \ i=1,...,m-1,  \\
\dfrac{\xi_m}{2}& \quad \hbox{ if } \ \ i=m,m+1.  \\
\end{array}\right.$$
Then, from \eqref{solution_eta} we get
$$\left\langle  \sum_{i=1}^{m+1}\eta_iG_i(x),y-x \right\rangle \geq 0  \ \ \forall y\in K_Q.$$
This means that $x\in \Sol(\Q_{\eta})$. Hence, one has $x\in \Sol(\Q)$. 

Conversely, for each $x\in\Sol(\Q)$, there exists $\eta\in \ri\Delta_{m+1}$ with $x\in \Sol(\Q_{\eta})$. Therefore,
\begin{equation}\label{solution_eta1}
\left\langle   \sum_{i=1}^{m+1}\eta_iG_i(x),y-x\right\rangle  \geq 0 \ \; \forall y\in K_Q.\end{equation}
From \eqref{solution_eta0} it follows that
	$$\sum_{i=1}^{m+1}\eta_iG_i(x)=\sum_{i=1}^{m-1}\eta_iF_i(x)+(\eta_m+\eta_{m+1})F_m(x).$$
Let $\xi\in \ri\Delta_m$ be given by
$$\xi_i=\left\{\begin{array}{ll}
\eta_i& \quad \hbox{ if } \ \ i=1,...,m-1,  \\
\eta_m+\eta_{m+1}& \quad \hbox{ if } \ \ i=m.  \\
\end{array}\right.$$
From \eqref{solution_eta1} we have 
$$\left\langle  \sum_{i=1}^m\xi_iF_i(x),y-x \right\rangle \geq 0  \ \ \forall y\in K_P.$$
This means that $x\in \Sol(\Pa_{\xi})$.

We have thus proved that $\Sol(\Pa)=\Sol(\Q)$. So, $\chi(\Sol(\Q))=\chi(\Sol(\Pa))=q$. Hence, by the definition of $\chi(m,n,p)$, $\chi(m,n,p) \leq \chi(m+1,n,p)$. 
 \qed
\end{proof}

A lower bound for $\chi(m,n,p)$ and $\chi^w(m,n,p)$ can be obtained, provided that the number of constraints is relatively small in comparison with the number of variables.  

\begin{theorem}\label{lowerbound2} Suppose that the affine vector variational inequality $(\AVVI)$ is monotone with $m\geq 2,\; n\geq 2$ and $\lfloor\frac{n}{2}\rfloor\geq p\geq 0$, where $\lfloor a \rfloor$ is the integer part of real number $a$. Then,
	$$\lfloor\frac{n}{2}\rfloor+p+1\leq \min\left\lbrace \chi(m,n,p), \chi^w(m,n,p)\right\rbrace.$$
\end{theorem}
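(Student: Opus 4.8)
The plan is to deduce the estimate from the explicit problems constructed in Theorems~\ref{even} and~\ref{evenn1}, upgraded to the odd-dimensional and multicriteria settings by the monotonicity results of Propositions~\ref{nondecreasingn} and~\ref{nondecreasingm}. Since the families $(\Pone)$ and $(\Pa_p)$ constructed there satisfy $\chi(\Sol)=\chi(\Sol^w)$, it will suffice to bound $\chi(m,n,p)$ from below; the bound for $\chi^w(m,n,p)$ follows by the same argument applied to the weak-Pareto versions of the two propositions, and the claimed bound for the minimum is then immediate.

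First I would treat $m=2$ with $n$ even. Writing $n=2s$, one has $\lfloor n/2\rfloor=s$, and for $0\le p\le s$ Theorem~\ref{even} (for $p=0$) and Theorem~\ref{evenn1} (for $1\le p\le s$) provide a monotone bicriteria AVVI with $n$ variables and $p$ linear constraints whose Pareto and weak Pareto solution sets each have exactly $s+p+1$ connected components. By the definition of $\chi(2,n,p)$ this gives $\chi(2,n,p)\ge s+p+1=\lfloor n/2\rfloor+p+1$.

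Next I would handle $m=2$ with $n$ odd, so $n\ge 3$, $n=2s+1$ with $s\ge 1$, and again $\lfloor n/2\rfloor=s$, while $0\le p\le s$. Applying the previous step to the even number $2s$ yields $\chi(2,2s,p)\ge s+p+1$, and a single invocation of Proposition~\ref{nondecreasingn} (fixed $m=2$ and $p$, passing from $2s$ to $2s+1$ variables) gives $\chi(2,2s,p)\le \chi(2,2s+1,p)=\chi(2,n,p)$. Hence $\chi(2,n,p)\ge s+p+1=\lfloor n/2\rfloor+p+1$ in this case as well. Note that the hypothesis $p\le\lfloor n/2\rfloor$ is exactly what is needed for Theorem~\ref{evenn1} to apply to the even number $2s$, since $\lfloor(2s+1)/2\rfloor=s=(2s)/2$.

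Finally, for arbitrary $m\ge 2$, iterating Proposition~\ref{nondecreasingm} $(m-2)$ times (with $n$ and $p$ fixed) gives $\chi(2,n,p)\le\chi(m,n,p)$, so the bound $\chi(m,n,p)\ge\lfloor n/2\rfloor+p+1$ persists; the identical chain for the weak Pareto sets gives $\chi^w(m,n,p)\ge\lfloor n/2\rfloor+p+1$, and taking the minimum concludes the proof. I do not expect a genuine obstacle here: the only care required is bookkeeping, namely checking that the parity reduction keeps us within the range $p\le\lfloor n/2\rfloor$ and invoking Propositions~\ref{nondecreasingn} and~\ref{nondecreasingm} as black boxes so that the count of linear constraints is tracked correctly through each step.
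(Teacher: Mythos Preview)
Your proposal is correct and follows essentially the same approach as the paper: reduce to the bicriteria even-dimensional examples of Theorems~\ref{even} and~\ref{evenn1}, then lift to odd $n$ via Proposition~\ref{nondecreasingn} and to general $m$ via Proposition~\ref{nondecreasingm}. The only difference is cosmetic: the paper first passes from $m=2$ to general $m$ (in the even case) and then from even to odd $n$, whereas you first handle all parities at $m=2$ and then increase $m$; you are also slightly more careful in separately invoking Theorem~\ref{even} for $p=0$, since Theorem~\ref{evenn1} is stated only for $1\le p\le n/2$.
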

\begin{proof} Let  $m\geq 2,\; n\geq 2$ and $p\geq 0$ be fixed with $\lfloor\frac{n}{2}\rfloor\geq p\geq 0$. We will prove the inequality \begin{equation}\label{ineq_new} \lfloor\frac{n}{2}\rfloor+p+1\leq  \chi(m,n,p),\end{equation} because the inequality $\lfloor\frac{n}{2}\rfloor+p+1\leq  \chi^w(m,n,p)$ can be proved similarly.

Suppose that $n$ is an even with $n=2s$, with $0 \leq p \leq s$. One has $$s+p+1\leq \chi(2,n,p)\leq \chi(m,n,p)\ \  \forall m\geq 2,$$ where the first inequality holds by Theorem \ref{evenn1} and the second one is valid by Proposition \ref{nondecreasingm}.
Now, suppose that $n$ is an odd number with $n=2s+1$ and $0 \leq p \leq s$. Applying the inequality $s+p+1\leq \chi(m,2s,p)$ and Proposition~\ref{nondecreasingn}, we have $$s+p+1\leq\chi(m,2s,p) \leq \chi(m,2s+1,p) \ \  \forall m\geq 2.$$ Thus, \eqref{ineq_new} is valid. \qed
\end{proof}
\begin{remark}
Theorem \ref{lowerbound2} shows that, for any fixed $m$ and $p$, the sequences $\{\chi(m,n,p):n\in \N\}$ and $\{\chi^w(m,n,p):n\in \N\}$ are unbounded. In other words, the parameter $n$ has a direct effect on the maximal number of connected components of the solution sets of monotone AVVIs.
\end{remark}

We conclude this section by two open questions.

\begin{question} When $n$ and $p$ are fixed,  is it true that $\{\chi(m,n,p):m\in \N\}$ and $\{\chi^w(m,n,p):m\in \N\}$ are bounded sequences, or not?
\end{question}
\begin{question}  When  $m$ and $n$ be fixed, is it true that  $\{\chi(m,n,p):p\in \N\}$ and $\{\chi^w(m,n,p):p\in \N\}$ are bounded sequences, or not?
\end{question}

\begin{acknowledgements}
	The author is indebted to Professor Nguyen Dong Yen for many stimulating conversations.
\end{acknowledgements}

\end{document}